\newcommand{\calB}{\mathcal{B}}
\newcommand{\calF}{\mathcal{F}}
\newcommand{\calG}{\mathcal{G}}
\newcommand{\bbC}{\mathbb{C}}
\newcommand{\bbN}{\mathbb{N}}
\newcommand{\bbR}{\mathbb{R}}
\newcommand{\bbZ}{\mathbb{Z}}
\newcommand{\rme}{\mathrm{e}}
\newcommand{\rmi}{\mathrm{i}}
\newcommand{\abs}[1]{\left\lvert #1 \right\rvert}
\DeclareMathOperator{\supp}{supp}
\renewcommand{\Re}{\operatorname{Re}}
\renewcommand{\Im}{\operatorname{Im}}
\newcommand{\dd}{\,\mathrm{d}}
\theoremstyle{thmstyleone}%
\newtheorem{theorem}{Theorem}
\newtheorem{lemma}[theorem]{Lemma}
\newtheorem{corollary}[theorem]{Corollary}
\theoremstyle{thmstyletwo}%
\newtheorem{remark}{Remark}%
\theoremstyle{thmstylethree}%
\begin{document}

\title[Sampling at twice the Nyquist rate in two frequency bins guarantees ...]{Sampling at twice the Nyquist rate in two frequency bins guarantees uniqueness in Gabor phase retrieval}


\author*[1]{\fnm{Matthias} \sur{Wellershoff}}\email{wellersm@umd.edu}



\affil*[1]{\orgdiv{Department of Mathematics}, \orgname{University of Maryland}, \orgaddress{\street{4176 Campus Drive}, \city{College Park, MD}, \postcode{20742}, \country{USA}}}




\abstract{We show that bandlimited signals can be uniquely recovered (up to a constant global phase factor) from Gabor transform magnitudes sampled at twice the Nyquist rate in two frequency bins.}

\keywords{Phase retrieval, Gabor transform, Nyquist--Shannon sampling, Hadamard factorisation theorem, M{\"u}ntz--Sz{\'a}sz type result}


\pacs[MSC Classification]{94A12, 42B10}

\maketitle

\bmhead{Acknowledgments}

The author would like to thank A.~Bandeira for posing the question which is being answered in the present letter. In addition, the author would like to thank the reviewers for their comments which allowed for a significant improvement in the presentation of the paper. Finally, the author acknowledges funding through the Swiss National Science Foundation grant 200021\_184698.


\section{Introduction}

We consider the recovery of square-integrable signals $f \in L^2(\bbR)$ from the magnitude of their \emph{Gabor transforms} 
\begin{equation}\label{eq:Gabor_transform}
    \calG f (x,\omega) := \sqrt[4]{2} \int_\bbR f(t) \rme^{-\pi (t-x)^2} \rme^{-2\pi\rmi t \omega}, \qquad (x,\omega) \in \bbR^2,
\end{equation}
also called the \emph{Gabor phase retrieval problem}. Specifically, we are interested in questions related to the reconstruction of $f$ from $\abs{\calG f}$ measured on lattices\footnote{A \emph{lattice} $\Lambda \subset \bbR^2$ is a discrete subset of the time-frequency plane that can be written as $\Lambda = L\bbR^k$, where $L \in \bbR^{2 \times k}$ is a matrix with linearly independent columns and $k \in \{1,2\}$.} $\Lambda \subset \bbR^2$, which we refer to as \emph{sampled Gabor phase retrieval}.


The first uniqueness result for sampled Gabor phase retrieval was presented in \cite{alaifari2021uniqueness} where it is shown that real-valued, bandlimited, square-integrable signals $f$ with bandwidth $B > 0$ can be recovered (up to a constant global sign factor) from $\abs{\calG f}$ sampled on $\tfrac{1}{4B} \bbZ \times \{0\}$. Just two months later, the paper \cite{grohs2023injectivity} appeared in which it is revealed that the real-valuedness assumption from \cite{alaifari2021uniqueness} can be dropped: specifically, the bandlimited square-integrable signals $f$ whose Fourier transform is in $L^4$ can be recovered (up to a constant global phase factor) from $\abs{\calG f}$ sampled on $\tfrac{1}{4B} \bbZ \times \bbZ$. Another two months later, the paper \cite{alaifari2021phase} appeared, showing that the bandlimitedness assumption cannot be dropped. This is achieved by constructing two square-integrable functions that do not agree (up to a constant global phase factor) but whose Gabor transform magnitudes agree on $\Lambda$, where $\Lambda \subset \bbR^2$ is a general lattice in the time-frequency plane.

Finally, we mention the recent preprint \cite{wellershoff2021injectivity} in which results from \cite{grohs2023injectivity} are extended to show that all bandlimited square-integrable signals $f$ can be recovered (up to a constant global phase factor) from $\abs{\calG f}$ sampled on $\tfrac{1}{4B} \bbZ \times \bbN$. In this paper, we further strengthen this result and prove that in fact bandlimited square-integrable signals $f$, with bandwidth $B > 0$, can be recovered (up to a constant global phase factor) from $\abs{\calG f}$ sampled on $\tfrac{1}{4B} \bbZ \times \{\omega_0,\omega_1\}$, where $\omega_0 \neq \omega_1$. As $\tfrac{1}{4B}$ is exactly twice the Nyquist rate, we therefore show that sampling at twice the Nyquist rate in two frequency bins guarantees uniqueness in Gabor phase retrieval as advertised in the title. We point out that a similar result was already known for the Cauchy wavelet transform \cite{alaifari2020phase}: more precisely, sampling at twice the Nyquist rate at two scales guarantees uniqueness in Cauchy wavelet phase retrieval.

\begin{remark}
    The original motivation for this paper stems from a resemblance of the result in \cite{alaifari2021uniqueness} on Gabor sign retrieval with the work in \cite{balan2006signal} on finite-dimensional sign retrieval: in the prior, it is shown that sampling at twice the Nyquist rate in a single frequency bin guarantees uniqueness in Gabor sign retrieval while, in the latter, it is shown that $2n-1$ generic measurement vectors are necessary and sufficient for uniqueness in finite-dimensional sign retrieval. As it is also known that on the order of $4n$ generic measurement vectors are necessary and sufficient for uniqueness in finite-dimensional phase retrieval \cite{balan2006signal,heinosaari2013quantum}, it seems natural to ask whether sampling at four times the Nyquist rate in one frequency bin or at twice the Nyquist rate in two frequency bins would guarantee uniqueness in Gabor phase retrieval. The former is clearly untrue as can be seen from considering $f,g \in \mathrm{PW}_B^2$ real-valued and 
    \begin{equation*}
        \abs{\calG (f + \rmi g)} = \abs{\calG (f - \rmi g)} \mbox{ on } \bbR \times \{0\}.
    \end{equation*}
    The latter is shown to be true in this paper.
\end{remark}

\subsection*{Notation} We denote the normalised Gaussian by $\phi(t) = \sqrt[4]{2} \exp(-\pi t^2)$, where $t \in \bbR$. For $-\infty < a < b < \infty$ and $f \in L^p(\bbR)$, with $p \in [1,\infty)$, we write $\operatorname{supp} f \subset [a,b]$ if $f(t) = 0$, for a.e.~$t \not\in [a,b]$. Moreover, we define the families of \emph{translation} and \emph{modulation} operators $(\operatorname{T}_x)_{x \in \bbR} : L^p(\bbR) \to L^p(\bbR)$ and $(\operatorname{M}_\omega)_{\omega \in \bbR} : L^p(\bbR) \to L^p(\bbR)$ by 
\begin{equation*}
    \operatorname{T}_x f(t) := f(t-x), \qquad \operatorname{M}_\omega f(t) := f(t) \rme^{2\pi\rmi t \omega}.
\end{equation*}
We furthermore use the convention 
\begin{equation*}
    \calF f (\xi) = \int_\bbR f(t) \rme^{-2\pi\rmi t \xi} \dd t, \qquad \xi \in \bbR,
\end{equation*}
for the Fourier transform of $f \in L^1(\bbR) \cup L^2(\bbR)$ and note that the Fourier transform of the normalised Gaussian is the normalised Gaussian itself, i.e.~$\calF \phi = \phi$. Additionally, as is usual in the phase retrieval literature, we will introduce an equivalence relation on the set of complex-valued functions via 
\begin{equation*}
    f \sim g :\iff \exists \alpha \in \bbR: f = \rme^{\rmi \alpha} g
\end{equation*}
and say that $f$ and $g$ \emph{agree up to global phase} if $f \sim g$. The space of polynomials with complex argument and complex coefficients is denoted by $\bbC[z]$. Similarly, the subspace of degree $n \in \bbN$ polynomials in $\bbC[z]$ is denoted by $\bbC_n[z]$. Finally, for an entire function $F : \bbC \to \bbC$, we denote its zero set by $Z(F) \subset \bbC$ and define a function $m_F : \bbC \to \bbN_0$ which assigns the multiplicity of $z$ as a zero of $F$ to every $z \in \bbC$. Note that we use the convention $m_F(z) = 0$, for $z \not\in Z(F)$.

\subsection*{Definitions and basic notions} 

We will work with the \emph{Paley--Wiener spaces} of bandlimited functions 
\begin{equation*}
    \mathrm{PW}_B^p := \{ f \in L^p(\bbR) \,;\, \operatorname{supp} (\calF f) \subset [-B,B] \}, \qquad p \in [1,\infty],
\end{equation*}
where $B > 0$. With this definition, it is well known that $\mathrm{PW}_B^1 \subset \mathrm{PW}_B^2$. It turns out to be useful to consider the \emph{Bargmann transform} of square-integrable signals $f \in L^2(\bbR)$ given by 
\begin{equation*}
    \calB f (z) := \sqrt[4]{2} \int_\bbR f(t) \rme^{2\pi t z - \pi t^2 - \tfrac{\pi}{2}z^2} \dd t, \qquad z \in \bbC. 
\end{equation*}
One of the two main reasons for this is that the Bargmann transform and the Gabor transform (as defined in equation~\eqref{eq:Gabor_transform}) are related via the formula \cite[Proposition 3.4.1 on p.~54]{grochenig2001foundations}
\begin{equation*}
    \calG f (x,-\omega) = \rme^{\pi \rmi x \omega} \calB f(x + \rmi \omega) \rme^{-\tfrac{\pi}{2} \left( x^2 + \omega^2 \right)}, \qquad (x,\omega) \in \bbR^2.
\end{equation*}
The other main reason is that the Bargmann transform of a square-integrable signal is an entire function of finite order. More precisely, we define the \emph{order} of an entire function $F : \bbC \to \bbC$ to be
\begin{equation*}
    \rho := \limsup_{r \to \infty} \frac{\log(\log (\sup_{\abs{z} = r} \abs{F(z)}))}{\log r}
\end{equation*}
and say that $F$ is of \emph{finite order} if $\rho < \infty$. For $F = \calB f$, with $f \in L^2(\bbR)$, one can use \cite[Theorem 3.4.2 on p.~54]{grochenig2001foundations} to show that $\rho \leq 2$: we say that $\calB f$ is of \emph{second-order}.

\section{Preliminaries}

Our goal is to recover a bandlimited function $f$ with bandwidth $B$ from the magnitudes of its Gabor transform $\abs{\calG f}$ sampled on the set $\tfrac{1}{4B} \bbZ \times \{\omega_0,\omega_1\}$, where $\omega_0 \neq \omega_1$. In order to do this, we follow a three-step procedure.

\begin{enumerate}
    \item We note that $\abs{\calG f(\cdot,\omega)}^2$ is bandlimited, for all $\omega \in \bbR$, and use the Nyquist--Shannon sampling theorem to recover $\abs{\calG f}^2$ on $\bbR \times \{\omega_0,\omega_1\}$. 
    \item Relating this to the Bargmann transform of $f$ shows that it suffices to analyse the recovery of a second-order entire function from magnitude measurements on two parallel lines. In this direction, we show that a second-order entire function is either uniquely determined (up to global phase) by its magnitude on two parallel lines or it has infinitely many evenly spaced zeroes.
    \item Finally, we make use of the bandlimitedness of $f$ again and show that the Bargmann transform of $f$ can only have infinitely many evenly spaced zeroes if $f = 0$.
\end{enumerate}

Let us start with the realisation of item 1. We note that the following lemma already follows from the considerations in \cite{wellershoff2021injectivity}. A considerably simpler proof based on a different convention for the Paley--Wiener spaces is given here. 

\begin{lemma}\label{lem:gabor_bandlimited}
    Let $B > 0$, $\omega \in \bbR$ and $f \in \mathrm{PW}_B^2$. Then, $x \mapsto \abs{\calG f(x,\omega)}^2 \in \mathrm{PW}_{2B}^2$.
\end{lemma}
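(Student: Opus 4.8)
The plan is to exploit the fact that, for fixed $\omega$, the Gabor transform is a convolution in the $x$-variable. Writing the kernel $\rme^{-\pi(t-x)^2}$ as a translate of the unnormalised Gaussian and absorbing the constant $\sqrt[4]{2}$, I would first record the identity
\begin{equation*}
    \calG f(\cdot,\omega) = \left( \operatorname{M}_{-\omega} f \right) * \phi,
\end{equation*}
which follows directly from the definition \eqref{eq:Gabor_transform} once one notes that $\phi(t) = \sqrt[4]{2}\,\rme^{-\pi t^2}$. Taking the Fourier transform in $x$ turns this convolution into a product, and using $\calF \phi = \phi$ together with the modulation-shift rule $\calF(\operatorname{M}_{-\omega} f)(\xi) = \calF f(\xi + \omega)$, I obtain
\begin{equation*}
    \calF \left[ \calG f(\cdot,\omega) \right](\xi) = \calF f(\xi + \omega)\, \phi(\xi).
\end{equation*}

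The next step is a support computation. Since $f \in \mathrm{PW}_B^2$, the factor $\calF f(\cdot + \omega)$ is supported in the length-$2B$ interval $[-B-\omega, B-\omega]$, while the Gaussian factor $\phi$ is nowhere zero; hence $F := \calG f(\cdot,\omega)$ is itself bandlimited, with $\calF F$ supported in $[-B-\omega, B-\omega]$. A bandlimited $L^2$ function has compactly supported, square-integrable (hence integrable) Fourier transform, so $F$ is continuous and bounded; combined with $F \in L^2(\bbR)$ this yields $F \in L^4(\bbR)$ and therefore $\abs{F}^2 \in L^2(\bbR)$. It remains only to pin down the support of $\calF[\abs{F}^2]$.

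For this, I would apply the convolution theorem to $\abs{F}^2 = F \overline{F}$, giving $\calF[\abs{F}^2] = \calF F * \calF \overline{F}$, where $\calF \overline{F}(\xi) = \overline{\calF F(-\xi)}$ is supported in the reflected interval $[-B+\omega, B+\omega]$. The support of the convolution is contained in the sumset of the two supports,
\begin{equation*}
    [-B-\omega, B-\omega] + [-B+\omega, B+\omega] = [-2B, 2B],
\end{equation*}
which is independent of $\omega$ and establishes $\abs{\calG f(\cdot,\omega)}^2 \in \mathrm{PW}_{2B}^2$. The only real obstacle here is bookkeeping rather than anything conceptual: one must justify the convolution theorem for the product $F\overline{F}$ and verify the $L^4$-membership so that all Fourier transforms and the sumset support statement are rigorous. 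The one feature that makes the argument clean is the everywhere-positivity of the Gaussian, which guarantees that multiplication by $\phi$ does not enlarge the support of $\calF f(\cdot+\omega)$.
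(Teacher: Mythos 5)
Your proof is correct and takes essentially the same route as the paper's: both identify the slice $\calG f(\cdot,\omega)$ (up to a harmless modulation) as an $L^2$ function whose Fourier transform is supported in an interval of length $2B$, and then apply the Fourier convolution theorem to $F\overline{F}$ to obtain a band of length $4B$. The only cosmetic differences are that the paper demodulates first so that its band is the centred interval $[-B,B]$ and concludes via the inclusion $\mathrm{PW}_{2B}^1 \subset \mathrm{PW}_{2B}^2$, whereas you keep the shifted band $[-B-\omega,B-\omega]$ and establish $\abs{F}^2 \in L^2(\bbR)$ through the $L^4$ argument.
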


\begin{proof}
    We have $x \mapsto \rme^{2\pi\rmi x\omega} \calG f(x,\omega) \in \mathrm{PW}_B^2$ since it is the (inverse) Fourier transform of $\calF f \cdot \operatorname{T}_\omega \phi \in L^2(\bbR)$ which satisfies $\supp(\calF f \cdot \operatorname{T}_\omega \phi) \subset [-B,B]$ \cite[Equation (3.5) on p.~39]{grochenig2001foundations}. Therefore, $x \mapsto \abs{\calG f(x,\omega)}^2 \in L^1(\bbR)$ and applying the Fourier convolution theorem to
    \begin{equation*}
        x \mapsto \abs{\calG f(x,\omega)}^2 = \calG f(x,\omega) \overline{\calG f(x,\omega)}
    \end{equation*}
    shows that $x \mapsto \abs{\calG f(x,\omega)}^2 \in \mathrm{PW}_{2B}^1 \subset \mathrm{PW}_{2B}^2$.
\end{proof}

Next, we move on to item 2 of our three-step procedure: the analysis of the recovery of a second-order entire function from magnitude measurements on two parallel lines. Interestingly, it can be shown that it is impossible to reconstruct a finite order entire function (up to global phase) from magnitude information on two parallel lines \cite{wellershoff2022phase}. It is therefore also impossible to recover a square-integrable signal from Gabor phase retrieval measurements on two parallel lines; a fact which has been used to construct the counterexamples to Gabor phase retrieval in \cite{alaifari2021phase}. We are not considering general square-integrable signals here however and the bandlimitedness assumption turns out to be sufficient to exclude counterexamples.

Let us start by realising that fixing the magnitude of an entire function on two parallel lines enforces a periodicity in its zeroes. More precisely, we show that, if two entire functions $F$ and $G$ have magnitudes that agree on $\bbR \cup (\bbR + \rmi \tau)$, then $m_F - m_G$ is $(2 \rmi \tau)$-periodic.

\begin{lemma}\label{lem:periodicity_zeroes}
    Let $\tau > 0$ and let $F,G \in \bbC \to \bbC$ be two entire functions such that $\abs{F} = \abs{G}$ on $\bbR \cup (\bbR + \rmi \tau)$. Then, 
    \begin{equation*}
        m_F(z + 2 \rmi \tau) - m_G(z + 2 \rmi \tau) = m_F(z) - m_G(z), \qquad z \in \bbC.
    \end{equation*}
\end{lemma}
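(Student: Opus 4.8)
The plan is to exploit the fact that prescribing $\abs{F}$ on a horizontal line forces, via a Schwarz-type reflection together with the identity theorem, a global multiplicative identity among $F$, $G$ and suitable reflected functions, from which the behaviour of the zero-counting functions can be read off directly. We may assume $F \not\equiv 0$ and $G \not\equiv 0$, since otherwise $\abs{F} = \abs{G} = 0$ on a set with an accumulation point forces both functions to vanish identically and there is nothing to prove.

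First I would treat the real axis. For an entire function $H$ I write $H^*(z) := \overline{H(\overline{z})}$, which is again entire and satisfies $m_{H^*}(z) = m_H(\overline{z})$, since $H^*$ vanishes to order $k$ at $z_0$ precisely when $H$ does so at $\overline{z_0}$. For $x \in \bbR$ one has $\abs{F(x)}^2 = F(x) F^*(x)$ and likewise for $G$, so the hypothesis $\abs{F} = \abs{G}$ on $\bbR$ gives $F F^* = G G^*$ on $\bbR$. As both sides are entire and agree on a set with an accumulation point, the identity theorem yields $F F^* = G G^*$ on all of $\bbC$. Comparing multiplicities and writing $h := m_F - m_G$, this reads
\begin{equation*}
    h(z) + h(\overline{z}) = 0, \qquad z \in \bbC.
\end{equation*}

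Next I would repeat the argument on the line $\bbR + \rmi\tau$, reflecting across it rather than across the real axis. The reflection $z \mapsto \overline{z} + 2\rmi\tau$ fixes this line, so $R_H(z) := \overline{H(\overline{z} + 2\rmi\tau)}$ is entire, satisfies $R_H(z) = \overline{H(z)}$ for $z \in \bbR + \rmi\tau$, and obeys $m_{R_H}(z) = m_H(\overline{z} + 2\rmi\tau)$. Exactly as before, $\abs{F}^2 = F R_F$ and $\abs{G}^2 = G R_G$ hold on $\bbR + \rmi\tau$, hence $F R_F = G R_G$ there and, by the identity theorem, everywhere on $\bbC$. Taking multiplicities gives
\begin{equation*}
    h(z) + h(\overline{z} + 2\rmi\tau) = 0, \qquad z \in \bbC.
\end{equation*}

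Finally I would combine the two functional equations. Applying the first identity at the point $w = \overline{z} + 2\rmi\tau$ (so that $\overline{w} = z - 2\rmi\tau$) gives $h(\overline{z} + 2\rmi\tau) = -h(z - 2\rmi\tau)$, and substituting this into the second identity yields $h(z) = h(z - 2\rmi\tau)$; replacing $z$ by $z + 2\rmi\tau$ then produces the asserted $(2\rmi\tau)$-periodicity of $h = m_F - m_G$. The only genuinely delicate points are verifying that $R_F$ and $R_G$ are entire with the stated multiplicity transformation, and justifying the passage from an identity on a single horizontal line to one on all of $\bbC$ via the identity theorem; once these are in place, the concluding step is a short substitution.
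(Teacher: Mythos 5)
Your proof is correct and follows essentially the same route as the paper: both arguments turn the magnitude equality on each line into a global product identity ($FF^* = GG^*$ and its analogue for the line $\bbR + \rmi\tau$), compare zero multiplicities to get the two functional equations $h(z) + h(\overline{z}) = 0$ and $h(z) + h(\overline{z} + 2\rmi\tau) = 0$ for $h = m_F - m_G$, and combine them by substitution. The only cosmetic differences are that you prove the reflection identity inline via the identity theorem where the paper cites a proposition of McDonald, and you reflect directly across $\bbR + \rmi\tau$ where the paper first translates that line down to the real axis.
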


\begin{proof}
    Let $z \in \bbC$ denote an arbitrary complex number. According to \cite[Proposition 1 on p.~261]{mcdonald2004phase}, $\abs{F} = \abs{G}$ on $\bbR$ implies that 
    \begin{equation*}
        F(z) \overline{F(\overline z)} = G(z) \overline{G(\overline z)}.
    \end{equation*}
    Therefore, after looking at the zeroes of the above equation exclusively, we have 
    \begin{equation*}
        m_F(z) + m_F(\overline z) = m_G(z) + m_G(\overline z).
    \end{equation*}
    The same argument applied to $F_\tau(z) := F(z+\rmi \tau)$ and $G_\tau(z) := G(z + \rmi \tau)$ yields 
    \begin{equation*}
        m_F(z + \rmi \tau) + m_F(\overline z + \rmi \tau) = m_G(z + \rmi \tau) + m_G(\overline z + \rmi \tau)
    \end{equation*}
    such that we can conclude that
    \begin{equation*}
        m_F(z + 2 \rmi \tau) - m_G(z + 2 \rmi \tau) = m_G(\overline z) - m_F(\overline z) = m_F(z) - m_G(z).
    \end{equation*}
\end{proof}

The periodicity in $m_F-m_G$ directly implies that the zeroes (with multiplicities) of $F$ and $G$ agree everywhere if they agree on the strip $\bbR + \rmi (-\tau,\tau]$. Combining this insight with the Hadamard factorisation theorem yields that a second-order entire function is either uniquely determined (up to global phase) by its magnitude on the two parallel lines $\bbR \cup (\bbR + \rmi \tau)$ or that it has at least one zero in the strip $\bbR + \rmi (-\tau,\tau]$.

\begin{corollary}\label{cor:agree_utgp}
    Let $\tau > 0$ and let $F,G \in \bbC \to \bbC$ be two entire functions \emph{of second-order} such that $\abs{F} = \abs{G}$ on $\bbR \cup (\bbR + \rmi \tau)$. If $m_F - m_G = 0$ on $\bbR + \rmi (-\tau,\tau]$, then $F \sim G$.
\end{corollary}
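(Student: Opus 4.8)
The plan is to first upgrade the local hypothesis $m_F - m_G = 0$ on the strip $\bbR + \rmi(-\tau,\tau]$ to the global statement that $F$ and $G$ share all of their zeros with multiplicities, and then to use the Hadamard factorisation theorem to reduce everything to identifying a quadratic exponent from the two magnitude constraints. Before that, I would dispose of the degenerate case: if $G \equiv 0$, then $\abs{F} = 0$ on $\bbR$ forces $F \equiv 0$ (an entire function vanishing on a set with an accumulation point is identically zero), so $F \sim G$ trivially; hence I may assume $F, G \not\equiv 0$ from now on.

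Next I would combine the hypothesis with Lemma~\ref{lem:periodicity_zeroes}, which gives that $m_F - m_G$ is $(2\rmi\tau)$-periodic. Since the half-open strip $\bbR + \rmi(-\tau,\tau]$ has height $2\tau$, it is a fundamental domain for translation by $2\rmi\tau$, so every $z \in \bbC$ can be shifted by an integer multiple of $2\rmi\tau$ into it. Periodicity then propagates the vanishing of $m_F - m_G$ from the strip to all of $\bbC$, whence $m_F = m_G$ everywhere; that is, $F$ and $G$ have exactly the same zeros with the same multiplicities.

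With identical zero sets I would invoke the Hadamard factorisation theorem. As both $F$ and $G$ are of second order, each factors as $\rme^{P(z)} \Pi(z)$, where $\Pi$ is the common canonical product (including any factor $z^{m}$ from a zero at the origin), assembled from the shared zeros with a common genus $p \le 2$, and where $P$ is a polynomial of degree at most $2$. Since the canonical products coincide, the quotient collapses to
\begin{equation*}
    \frac{F(z)}{G(z)} = \rme^{Q(z)}, \qquad Q(z) = a z^2 + b z + c,
\end{equation*}
for some $a,b,c \in \bbC$; in particular $F = \rme^{Q} G$ on all of $\bbC$.

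It then remains to read off $Q$ from the magnitude data. On the real line, $\abs{F} = \abs{G}$ forces $\Re Q(x) = 0$ for every $x \in \bbR$; expanding $Q$ into real and imaginary parts and matching powers of $x$ shows that $a$, $b$, and $c$ are all purely imaginary. On the shifted line $\bbR + \rmi\tau$, the condition $\Re Q(x+\rmi\tau) = 0$ for all $x \in \bbR$ then yields, after using $\tau > 0$, that the surviving real parameters in $a$ and $b$ vanish, leaving $Q \equiv \rmi\alpha$ for some $\alpha \in \bbR$. Consequently $F = \rme^{\rmi\alpha} G$, i.e.~$F \sim G$. The only steps requiring genuine care are the passage from the strip to all of $\bbC$ via the periodicity of Lemma~\ref{lem:periodicity_zeroes} and the correct bookkeeping in the Hadamard factorisation (a common genus and the degree bound $\deg P \le 2$); the final extraction of $Q$ is a short linear computation.
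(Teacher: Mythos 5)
Your proof is correct and takes essentially the same route as the paper's: the periodicity of $m_F - m_G$ from Lemma~\ref{lem:periodicity_zeroes} propagates the vanishing of $m_F - m_G$ from the fundamental strip to all of $\bbC$, Hadamard factorisation then gives $F = \rme^{Q} G$ with $Q$ a quadratic polynomial, and the magnitude equalities on the two lines force $Q$ to be a purely imaginary constant. Your explicit disposal of the degenerate case $G \equiv 0$ (where $m_G$ is not well defined) is a small point of extra care that the paper leaves implicit, but it does not change the argument.
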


\begin{proof}
    If $m_F - m_G = 0$ on $\bbR + \rmi (-\tau,\tau]$, then Lemma~\ref{lem:periodicity_zeroes} implies $m_F - m_G = 0$ such that the zeroes (with multiplicity) of $F$ and $G$ agree. It therefore follows from Hadamard's factorisation theorem (cf.~\cite[Subsection~8.24 on p.~250]{titchmarsh1939theory}) that 
    \begin{equation*}
        F(z) = \mathrm{e}^{Q(z)} G(z), \qquad z \in \bbC,
    \end{equation*}
    where $Q \in \bbC_2[z]$ is a quadratic polynomial. As $\abs{F} = \abs{G}$ on $\bbR$, we furthermore have 
    \begin{equation*}
        \abs{F(x)} = \rme^{\Re Q(x)} \abs{G(x)} = \rme^{\Re Q(x)} \abs{F(x)}, \qquad x \in \bbR,
    \end{equation*}
    which implies that $\exp(\Re Q) = 1$ holds on $\bbR \setminus Z(F)$. As $F$ is entire, $Z(F)$ has no accumulation point in $\bbR$ and is thus of measure zero. It follows that $\Re Q = 0$ almost everywhere (and thus everywhere) on $\bbR$. We can now write 
    \begin{equation*}
        Q(z) = \rmi \left(\alpha + \lambda_1 z + \lambda_2 z^2\right), \qquad z \in \bbC,
    \end{equation*}
    for some $\alpha,\lambda_1,\lambda_2 \in \bbR$. The same argument as above shows that $\abs{F} = \abs{G}$ on $\bbR + \rmi \tau$ implies $\Re Q = 0$ on $\bbR + \rmi \tau$. Therefore, we find 
    \begin{equation*}
        \Re Q(x + \rmi \tau) = \lambda_1 \tau + 2 \lambda_2 \tau x = 0, \qquad x \in \bbR,
    \end{equation*}
    which proves that $\lambda_1 = \lambda_2 = 0$ and thus $F = \rme^{\rmi \alpha} G$. 
\end{proof}

\begin{remark}
    Corollary~\ref{cor:agree_utgp} actually holds for general entire functions of finite order $F,G \in \bbC \to \bbC$. The proof remains mostly the same: only the polynomial $Q \in \bbC[z]$ is of arbitrary order instead of quadratic. We therefore have 
    \begin{equation*}
        Q(z) = \rmi \sum_{j = 0}^n \lambda_j z^j, \qquad z \in \bbC,
    \end{equation*}
    for some $n \in \bbN$ and $(\lambda_j)_{j=0}^n \in \bbR$, and, as $\Re Q = 0$ on $\bbR + \rmi \tau$, we find
    \begin{equation*}
        \sum_{j = 0}^n \lambda_j \Im\left( (x + \rmi \tau)^j \right) = 0, \qquad x \in \bbR.
    \end{equation*}
    The binomial theorem, then shows
    \begin{equation*}
        0 = \sum_{j = 0}^n \lambda_j \sum_{k = 0}^j \binom{j}{k} x^{k} \tau^{j-k} \Im\left(\rmi^{j-k}\right)
        = \sum_{k = 0}^n x^k \sum_{j = k}^n \binom{j}{k} \lambda_j \tau^{j-k} \Im\left(\rmi^{j-k}\right),
    \end{equation*}
    for $x \in \bbR$. Comparing coefficients yields 
    \begin{align*}
        0 &= \sum_{j = k}^n \binom{j}{k} \lambda_j \tau^{j-k} \Im\left(\rmi^{j-k}\right) = \sum_{j = 0}^{n-k} \binom{j+k}{k} \lambda_{j+k} \tau^j \Im\left(\rmi^j\right) \\
        &= \sum_{j = 0}^{\lfloor (n-k-1)/2 \rfloor} \binom{2j+k+1}{k} \lambda_{2j+k+1} \tau^{2j+1} (-1)^j,
    \end{align*}
    for $k = 0,\dots,n-1$, which can be used inductively to show that $\lambda_j = 0$, for $j = 1,\dots,n$. Indeed, for $k = n-1$, we find 
    \begin{equation*}
        0 = \binom{n}{n-1} \lambda_n \tau \implies \lambda_n = 0.
    \end{equation*}
    If we assume that $\lambda_j = 0$, for $j = k+2,\dots,n$, with $k \in \{0,\dots,n-2\}$, then
    \begin{multline*}
        0 = \sum_{j = 0}^{\lfloor (n-k-1)/2 \rfloor} \binom{2j+k+1}{k} \lambda_{2j+k+1} \tau^{2j+1} (-1)^j = \binom{k+1}{k} \lambda_{k+1} \tau \\
        \implies \lambda_{k+1} = 0.
    \end{multline*}
\end{remark}

We can finally turn to item 3 of our three step procedure: using bandlimitedness to show that $f = 0$ if $\calB f$ has infinitely many evenly spaced zeroes. The inspiration for this step actually comes from \cite{grohs2023injectivity,wellershoff2021injectivity}, where a M{\"u}ntz--Sz{\'a}sz type result from \cite{zalik1978approximation} is used in order to recover bandlimited $f$ from $\abs{\calG f}$ on $\bbR \times \bbN$. We rely on a slight generalisation of that same M{\"u}ntz--Sz{\'a}sz type result for this paper.

\begin{theorem}[Zalik's theorem; cf.~Theorem~4 in \cite{zalik1978approximation}]\label{thm:zalik}
    Let $p \in [1,\infty)$, let $a,b \in \bbR$ be such that $a < b$, let $r > 0$ and let $(c_n)_{n \in \bbN} \in \bbC$ be a sequence of distinct complex numbers such that there exists a $\delta > 0$ and an $N_0 \in \bbN$ with 
    \begin{equation*}
        \abs{\Re \left[ c_n - \tfrac12 \right]} \geq \delta \abs{c_n - \tfrac12}, \qquad n \geq N_0.
    \end{equation*}
    Then, 
    \begin{equation*}
        \left\{ t \mapsto \rme^{-r^2(t - c_n)^2} \,;\, n \in \bbN \right\}
    \end{equation*}
    is complete in $L^p([a,b])$ if and only if 
    \begin{equation*}
        \sum_{n \in \bbN,~c_n \neq 0} \abs{c_n}^{-1}
    \end{equation*}
    diverges.
\end{theorem}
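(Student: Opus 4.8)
The plan is to recast completeness as a question about the zero sets of entire functions of exponential type. First I would remove the fixed Gaussian weight: from $\rme^{-r^2(t-c_n)^2} = \rme^{-r^2 c_n^2}\,\rme^{-r^2 t^2}\,\rme^{2 r^2 c_n t}$ one sees that dividing each member by the nonzero scalar $\rme^{-r^2 c_n^2}$ and multiplying through by the bounded, strictly positive weight $\rme^{-r^2 t^2}$ (an isomorphism of $L^p([a,b])$, as $[a,b]$ is compact) leaves closed spans unchanged. Hence $\{t \mapsto \rme^{-r^2(t-c_n)^2}\}$ is complete in $L^p([a,b])$ if and only if $\{t \mapsto \rme^{2 r^2 c_n t}\}$ is. By duality, the latter fails exactly when some nonzero $h \in L^{p'}([a,b])$ (with $p'$ the conjugate exponent, read as $L^\infty$ for $p=1$) annihilates every exponential, i.e.\ when the entire function $\Phi(z) := \int_a^b h(t)\,\rme^{2 r^2 z t}\dd t$ is not identically zero yet vanishes at every $c_n$. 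As $h$ is supported in $[a,b]$, $\Phi$ is of Paley--Wiener type, its exponential growth governed by $[a,b]$, and the theorem becomes the assertion that such a nonzero $\Phi$ vanishing at all $c_n$ exists if and only if $\sum_{c_n \neq 0}\abs{c_n}^{-1} < \infty$.

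For the implication that divergence forces completeness I would argue by contradiction from such a $\Phi$. For $n \geq N_0$ each $c_n$ lies in one of the two horizontal cones $\pm\Re(c_n - \tfrac12) \geq \delta\,\abs{c_n - \tfrac12}$, and since $\sum\abs{c_n}^{-1}$ diverges at least one cone, say the right one, captures a subsequence with divergent reciprocal sum. On the right half-plane the function $\Psi(z) := \rme^{-2 r^2 b z}\Phi(z) = \int_a^b h(t)\,\rme^{2 r^2(t-b)z}\dd t$ is bounded by $\norm{h}_{L^1([a,b])}$, because $t - b \leq 0$; moreover $\Psi \not\equiv 0$, since $\Phi \equiv 0$ would force all moments $\int_a^b h(t)\,t^k\dd t$ to vanish and hence $h = 0$. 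A nonzero bounded holomorphic function on a half-plane satisfies the Blaschke condition, so its zeros---which include the chosen subsequence---obey $\sum \Re(c_n)/(1 + \abs{c_n}^2) < \infty$. But the cone condition gives $\Re(c_n) \geq \delta\abs{c_n - \tfrac12} + \tfrac12 \geq c\,\abs{c_n}$ for large $n$ and some $c > 0$, whence $\Re(c_n)/(1+\abs{c_n}^2) \geq c'\abs{c_n}^{-1}$ and the reciprocal sum over the subsequence converges---a contradiction. The left cone is handled identically with $\rme^{-2 r^2 a z}\Phi$ on the left half-plane. Hence no annihilator exists and the system is complete.

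The reverse implication---that convergence forces incompleteness---is where I would construct an annihilator, and I expect this to be the main obstacle. The tempting first attempt, forming the genus-zero canonical product $\prod_{c_n \neq 0}(1 - z/c_n)$ (convergent because $\sum\abs{c_n}^{-1} < \infty$) and multiplying by a smoothing factor before invoking Paley--Wiener, fails: already for $c_n = n^2$ the product grows like $\exp(c\sqrt{\abs{y}})$ along the imaginary axis, and by Phragm\'en--Lindel\"of no entire function of exponential type decays faster than polynomially on vertical lines, so it cannot compensate. The correct route is to pass, via the substitution $s = \rme^{2 r^2 t}$, to the M\"untz power system $\{s^{c_n}\}$ on $[\rme^{2 r^2 a}, \rme^{2 r^2 b}]$---a change of variables whose Jacobian is bounded above and below away from zero, so $L^p$-completeness is preserved---and to run the M\"untz--Sz\'asz mechanism there: one estimates the distance from a fixed power $s^{m}$ to $\overline{\Span}\{s^{c_n}\}$ by an explicit product over the exponents and shows it stays bounded away from zero precisely when $\sum\abs{c_n}^{-1} < \infty$. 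The functional produced this way can be taken continuous, hence in $L^\infty([a,b])$, which witnesses incompleteness across the whole range $p \in [1,\infty)$ at once.

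It is exactly at this last step that the cone hypothesis $\abs{\Re(c_n - \tfrac12)} \geq \delta\abs{c_n - \tfrac12}$ is indispensable: it forces the complex exponents $c_n$ to behave asymptotically like the positive real exponents of the classical theory, so that the distance estimate---and with it the construction of the annihilating functional---carries over. This is precisely the point at which the argument leans on Zalik's Theorem~4, and the present statement is a \emph{slight} generalisation in that it admits complex exponents in a horizontal cone, an arbitrary scale $r$ and interval $[a,b]$, and the full range $p \in [1,\infty)$, rather than real translates alone.
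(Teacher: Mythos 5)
Note first that the paper does not actually prove Theorem~\ref{thm:zalik}: its ``proof'' is a single sentence deferring to Zalik's original argument in \cite{zalik1978approximation} with ``small modifications'', so your proposal has to be judged as a self-contained attempt. Your reduction (strip the Gaussian weight, pass to the exponential system $\{\rme^{2r^2c_nt}\}$, dualise to an entire function $\Phi(z)=\int_a^b h(t)\rme^{2r^2zt}\dd t$ vanishing at all $c_n$) is the right framework, and your proof of the direction \emph{divergence $\Rightarrow$ completeness} is correct: $\rme^{-2r^2bz}\Phi$ and $\rme^{-2r^2az}\Phi$ are bounded on the right and left half-planes respectively, the cone hypothesis places a subsequence with divergent reciprocal sums inside one of them with $\Re c_n \geq \delta'\abs{c_n}$ (respectively $-\Re c_n\geq \delta'\abs{c_n}$) for large $n$, and the Blaschke condition gives the contradiction. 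Two small patches are needed: you must first record that $\abs{c_n}\to\infty$ (the $c_n$ are distinct zeroes of the nonzero entire function $\Phi$, so they cannot accumulate), and you must discard the finitely many left-cone points that are not yet in the open left half-plane. This is, incidentally, the only direction the paper ever uses (in the proof of Theorem~\ref{thm:main_result}), and it is the direction where the cone hypothesis is provably indispensable, contrary to your closing paragraph: for $c_n = \rmi Kn$ with $2r^2K(b-a)>2\pi$ the reciprocal sums diverge, yet the system is incomplete, since after rescaling the exponentials become $\{\rme^{\rmi ns}\}_{n\in\bbN}$ on an interval of length greater than $2\pi$, and these are annihilated by $s\mapsto\rme^{\rmi s}$ placed on a subinterval of length $2\pi$ and extended by zero.

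The genuine gap is the direction \emph{convergence $\Rightarrow$ incompleteness}. The ``explicit product'' distance estimate you invoke is Sz{\'a}sz's mechanism on $[0,1]$: it rests on the Gram matrix being a Cauchy matrix, $\int_0^1 s^{c_j}\,s^{\overline{c_k}}\dd s = (c_j+\overline{c_k}+1)^{-1}$, and this structure is destroyed on the interval $[\rme^{2r^2a},\rme^{2r^2b}]$ that your substitution produces, precisely because that interval is bounded away from the origin: the corresponding Gram integrals there admit no Cauchy-determinant evaluation. Moreover, your exponents may lie in \emph{both} cones, hence may have arbitrarily negative real parts, and for such exponents the $[0,1]$ theory is not even applicable; indeed, on $[0,1]$ the completeness criterion is not the symmetric condition $\sum\abs{c_n}^{-1}=\infty$ but an asymmetric one weighted by $\Re c_n$, so the mechanism you describe would not yield the stated theorem even where it applies. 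What this direction actually requires is the full M{\"u}ntz theorem on intervals away from the origin --- Clarkson--Erd{\H o}s--Schwartz for real exponents, Luxemburg--Korevaar (or a multiplier construction in the spirit of Beurling--Malliavin) for complex ones: one must produce an entire function of arbitrarily small exponential type, bounded on the imaginary axis, vanishing at every $c_n$, and the canonical product $\prod_n(1-z/c_n)$ alone grows like $\exp(c\sqrt{\abs{z}})$ on that axis --- the very obstruction you point out when discarding your ``first attempt''. So this half of the equivalence is asserted rather than proved. Since the paper's main theorem needs only the completeness half, your Blaschke argument would suffice for the application, but it does not establish Theorem~\ref{thm:zalik} as stated.
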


\begin{proof}
    The theorem follows from the original proof in \cite{zalik1978approximation} with some small modifications.
\end{proof}

\begin{remark}
    Zalik's original result is stated for $p = 2$ and \emph{real numbers} $(c_n)_{n \in \bbN}$. We do not make use of the added generality in the integrability parameter $p$ here but the proof of our main result does require the sequence $(c_n)_{n \in \bbN}$ to be complex-valued.
\end{remark}

\section{The main result}

We are now in a position to state and prove our main result: general bandlimited signals can be recovered from their Gabor transform magnitudes sampled at twice the Nyquist rate in two frequency bins.

\begin{theorem}[Main result]\label{thm:main_result}
    Let $B > 0$, let $\omega_0,\omega_1 \in \bbR$ be such that $\omega_0 < \omega_1$ and let $f,g \in \mathrm{PW}_B^2$. Then, $f \sim g$ if (and only if) $\abs{\calG f} = \abs{\calG g}$ on $\tfrac{1}{4B} \bbZ \times \{\omega_0,\omega_1\}$.
\end{theorem}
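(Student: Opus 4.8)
The plan is to carry out the three-step programme laid out in the preliminaries; the forward (``only if'') direction is immediate, since $f = \rme^{\rmi \alpha} g$ forces $\abs{\calG f} = \abs{\calG g}$ everywhere. For the converse, I would first apply Lemma~\ref{lem:gabor_bandlimited}: for each $\omega \in \{\omega_0,\omega_1\}$ the functions $x \mapsto \abs{\calG f(x,\omega)}^2$ and $x \mapsto \abs{\calG g(x,\omega)}^2$ lie in $\mathrm{PW}_{2B}^2$, and the spacing $\tfrac{1}{4B}$ is exactly the Nyquist spacing for $\mathrm{PW}_{2B}^2$. By the Nyquist--Shannon sampling theorem each such function is determined by its samples on $\tfrac{1}{4B}\bbZ$; since the two agree there, they agree on all of $\bbR$, so $\abs{\calG f} = \abs{\calG g}$ on $\bbR \times \{\omega_0,\omega_1\}$.

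Next I would pass to the Bargmann side. Inserting the intertwining relation between $\calG$ and $\calB$ and taking magnitudes, the Gaussian prefactor cancels and the previous equality becomes $\abs{\calB f} = \abs{\calB g}$ on the two parallel lines $\bbR - \rmi \omega_0$ and $\bbR - \rmi \omega_1$. Setting $F(z) := \calB f(z - \rmi \omega_1)$, $G(z) := \calB g(z - \rmi \omega_1)$ and $\tau := \omega_1 - \omega_0 > 0$, these are second-order entire functions with $\abs{F} = \abs{G}$ on $\bbR \cup (\bbR + \rmi \tau)$, which is the hypothesis of Corollary~\ref{cor:agree_utgp}. If $m_F - m_G = 0$ on the strip $\bbR + \rmi(-\tau,\tau]$, the corollary gives $F \sim G$, hence $\calB f \sim \calB g$, and by linearity and injectivity of the Bargmann transform $f \sim g$.

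It remains to treat the case $m_F - m_G \neq 0$ at some $z_0$ in the strip. By Lemma~\ref{lem:periodicity_zeroes} the difference is $(2\rmi\tau)$-periodic, so $m_F - m_G \neq 0$ at $z_0 + 2\rmi k \tau$ for \emph{every} $k \in \bbZ$, and at each such point at least one of $F,G$ vanishes. Writing $S_F, S_G \subset \bbZ$ for the indices at which $F$, respectively $G$, vanishes there, we have $S_F \cup S_G = \bbZ$; since $\sum_{k \neq 0} \abs{k}^{-1}$ diverges, at least one of $\sum_{k \in S_F,\, k \neq 0} \abs{k}^{-1}$ and $\sum_{k \in S_G,\, k \neq 0} \abs{k}^{-1}$ diverges. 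Assuming the former, $\calB f$ vanishes at the evenly spaced points $w_k = z_0 - \rmi \omega_1 + 2 \rmi k \tau$, $k \in S_F$, lying on a vertical line and having divergent reciprocal-modulus sum.

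Finally I would convert these zeroes into a completeness statement and invoke Zalik's theorem. Completing the square in the Bargmann integral shows that $\calB f(w_k) = 0$ is equivalent to $\int_{-B}^{B} \calF f(\xi)\, \rme^{-\pi (\xi - \rmi w_k)^2} \dd \xi = 0$, using $\supp \calF f \subset [-B,B]$. The centres $c_k := \rmi w_k$ have constant imaginary part and real parts in arithmetic progression tending to $\pm\infty$, so the cone condition $\abs{\Re[c_k - \tfrac12]} \geq \delta \abs{c_k - \tfrac12}$ holds for all large $\abs{k}$, while $\sum \abs{c_k}^{-1}$ diverges along $S_F$. Passing to complex conjugates to turn the bilinear pairing into an inner product, Theorem~\ref{thm:zalik} (with $p = 2$, $[a,b] = [-B,B]$, $r = \sqrt{\pi}$) makes the associated Gaussian family complete in $L^2([-B,B])$; orthogonality of $\calF f$ to a complete family forces $\calF f = 0$, i.e.\ $f = 0$, and a symmetric application of the first step then yields $g = 0$, so again $f \sim g$. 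The main obstacle is this last step: one must notice that the covering $S_F \cup S_G = \bbZ$ is exactly what supplies the divergence hypothesis of Zalik's theorem for one of the two functions, and verify that the complex Gaussian nodes $c_k$ meet Zalik's cone condition---which is precisely why the complex-node generalisation in Theorem~\ref{thm:zalik} is indispensable.
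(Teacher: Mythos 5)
Your proposal is correct and follows essentially the same route as the paper: Lemma~\ref{lem:gabor_bandlimited} plus Nyquist--Shannon to upgrade samples to the full lines, reduction to Bargmann magnitudes on two parallel lines, the dichotomy via Lemma~\ref{lem:periodicity_zeroes} and Corollary~\ref{cor:agree_utgp}, and Theorem~\ref{thm:zalik} to rule out the periodic-zero case. The only (harmless) deviation is in that last case: the paper notes that $m_F - m_G$ keeps a constant positive sign along the progression (after possibly swapping $F$ and $G$), so $F$ itself vanishes at \emph{every} point $z_0 + 2\rmi k\tau$, whereas you use only that one of $F,G$ vanishes at each point and recover Zalik's divergence hypothesis by a covering argument over $S_F \cup S_G = \bbZ$ --- both work, and your closing step ($f=0$ forces $\abs{G}=0$ on a line, hence $g=0$) matches the paper's identity-theorem finish.
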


\begin{proof}
    Let us suppose that $\abs{\calG f} = \abs{\calG g}$ on $\tfrac{1}{4B} \bbZ \times \{\omega_0,\omega_1\}$ and fix $j \in \{0,1\}$. According to Lemma~\ref{lem:gabor_bandlimited}, the functions 
    \begin{equation*}
        x \mapsto \lvert \calG f (x,\omega_j) \rvert^2, \qquad x \mapsto \lvert \calG g (x,\omega_j) \rvert^2
    \end{equation*}
    are in $\mathrm{PW}_{2B}^2$. By the Nyquist--Shannon sampling theorem, we therefore find that 
    \begin{equation}
        \label{eq:two_parallel_lines}
        \abs{\calG f (x,\omega_j)}^2 = \abs{\calG g (x,\omega_j)}^2, \qquad x \in \bbR.
    \end{equation}

    Next, we define the second-order entire functions $F(z) := \calB f (z + \rmi \omega_0)$ and $G(z) := \calB g (z + \rmi \omega_0)$, for $z \in \bbC$, which satisfy
    \begin{equation*}
        \abs{F(x)} = \abs{G(x)}, \qquad \abs{F(x + \rmi \tau)} = \abs{G(x + \rmi \tau)}, \qquad x \in \bbR,
    \end{equation*}
    for $\tau := \omega_1-\omega_0 > 0$, according to equation~\eqref{eq:two_parallel_lines}. In the rest of this proof, we will distinguish between two cases: $m_F - m_G = 0$ on $\bbR \times \rmi(-\tau,\tau]$ and $m_F - m_G \neq 0$ on $\bbR \times \rmi(-\tau,\tau]$. In the first case, i.e.~when $m_F - m_G = 0$ on $\bbR \times \rmi(-\tau,\tau]$, Corollary~\ref{cor:agree_utgp} implies that $F \sim G$. Since the Bargmann transform is injective\footnote{Actually, the range of the Bargmann transform can be equipped with an inner product and thereby turned into a Hilbert space which known as the Fock space $\calF^2(\bbC)$. The Bargmann transform then turns out to be a unitary operator mapping $L^2(\bbR)$ onto $\calF^2(\bbC)$ \cite[Theorem 3.4.3 on p.~56]{grochenig2001foundations}.}, it follows that $f \sim g$.

    In the second case, we may without loss of generality assume that there exists a complex number $z_0 \in \bbR \times (-\tau,\tau]$ such that $m_F(z_0) - m_G(z_0) > 0$: indeed, there exists a complex number at which $m_F - m_G$ is non-zero and we can exchange $F$ and $G$ if we have $m_F(z_0) - m_G(z_0) < 0$. By Lemma~\ref{lem:periodicity_zeroes}, we therefore find that 
    \begin{equation*}
        m_F(z_0 + 2 \rmi k \tau) - m_G(z_0 + 2 \rmi k \tau) = m_F(z_0) - m_G(z_0) > 0,
    \end{equation*}
    for $k \in \bbZ$. It follows that $(z_0 + 2 \rmi k \tau)_{k \in \bbZ} \in \bbC$ forms a sequence of zeroes of $F$, i.e.
    \begin{equation*}
        \calB f (z_0 + \rmi (\omega_0 + 2 k \tau) ) = 0, \qquad k \in \bbZ. 
    \end{equation*}
    The Bargmann transform satisfies the nice symmetry $\calB f (- \rmi z) = \calB \calF f(z)$, for $z \in \bbC$ (c.f.~\cite[Equation (3.10a) on p.~207]{bargmann1961Hilbert}). Therefore, 
    \begin{align*}
        0 &= \calB f (z_0 + \rmi (\omega_0 + 2 k \tau) ) = \calB \calF f (\rmi z_0 - \omega_0 - 2 k \tau) \\
        &= \rme^{\tfrac{\pi}{2} (\rmi z_0 - \omega_0 - 2 k \tau)^2} \int_{-B}^B \calF f(\xi) \rme^{-\pi (\xi - \rmi z_0 + \omega_0 + 2 k \tau)^2} \dd \xi
    \end{align*}
    which implies that $\calF f$ is orthogonal to $\xi \mapsto \rme^{-\pi (\xi - \rmi z_0 + \omega_0 + 2 k \tau)^2}$ in $L^2([-B,B])$, for all $k \in \bbZ$. Theorem~\ref{thm:zalik} now implies that $\calF f = 0$ and thus $f = 0$. Hence, $F = 0$ and as $\abs{F} = \abs{G}$ the identity theorem of complex analysis can be used to show that $G = 0$ such that $g = 0 = f$.
\end{proof}

Our main result may alternatively be stated without reference to the Gabor transform. Indeed, it is equivalent to the following theorem.

\begin{theorem}
    Let $B > 0$ and let $\omega_0, \omega_1 \in \bbR$ be such that $\omega_0 \neq \omega_1$. If $f,g \in \mathrm{PW}_B^2$ satisfy
    \begin{equation*}
        \abs{\phi \ast (\operatorname{M}_{-\omega_j} f)} = \abs{\phi \ast (\operatorname{M}_{-\omega_j} g)}, \qquad j = 0,1,
    \end{equation*}
    then $f \sim g$.
\end{theorem}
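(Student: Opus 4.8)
The plan is to reduce this statement to the Main Result (Theorem~\ref{thm:main_result}) by recognising the convolution $\phi \ast (\operatorname{M}_{-\omega} f)$ as a section of the Gabor transform. First I would compute, for any $\omega \in \bbR$,
\begin{equation*}
    (\phi \ast (\operatorname{M}_{-\omega} f))(x) = \int_\bbR \phi(x - t) f(t) \rme^{-2\pi\rmi t\omega} \dd t, \qquad x \in \bbR,
\end{equation*}
and observe that, because the normalised Gaussian is real and even, $\phi(x-t) = \phi(t-x)$. Comparing with the definition of the Gabor transform in equation~\eqref{eq:Gabor_transform}, this yields the pointwise identity $\phi \ast (\operatorname{M}_{-\omega} f) = \calG f(\cdot,\omega)$. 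Consequently, the hypothesis of the present theorem is nothing other than
\begin{equation*}
    \abs{\calG f(\cdot,\omega_j)} = \abs{\calG g(\cdot,\omega_j)} \quad \mbox{on } \bbR, \qquad j = 0,1.
\end{equation*}

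Next I would simply restrict these equalities to the lattice $\tfrac{1}{4B}\bbZ$, obtaining $\abs{\calG f} = \abs{\calG g}$ on $\tfrac{1}{4B}\bbZ \times \{\omega_0,\omega_1\}$. Since $f,g \in \mathrm{PW}_B^2$ and $\omega_0 \neq \omega_1$ (relabelling so that $\omega_0 < \omega_1$ if necessary), Theorem~\ref{thm:main_result} applies verbatim and delivers $f \sim g$, which is exactly the desired conclusion.

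To justify the assertion that the two formulations are genuinely \emph{equivalent}, I would also record the converse reduction: every $f,g$ satisfying the hypothesis of Theorem~\ref{thm:main_result} satisfies the hypothesis here. This is the one place where Lemma~\ref{lem:gabor_bandlimited} is used — the functions $x \mapsto \abs{\calG f(x,\omega_j)}^2$ lie in $\mathrm{PW}_{2B}^2$, so equality of their samples on $\tfrac{1}{4B}\bbZ$ propagates, by the Nyquist--Shannon sampling theorem, to equality on all of $\bbR$; recasting this through the identity $\phi \ast (\operatorname{M}_{-\omega}f) = \calG f(\cdot,\omega)$ recovers the convolution hypothesis.

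I expect no serious obstacle: the whole argument is a reformulation, and the only point demanding any care is the convolution identity, which rests solely on the evenness of $\phi$. All the substantive work — the passage through the Bargmann transform, the periodicity of the zero sets, Hadamard factorisation, and Zalik's theorem — has already been carried out in the proof of Theorem~\ref{thm:main_result}.
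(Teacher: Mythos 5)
Your proof is correct and takes exactly the route the paper intends: the paper states this theorem merely as an equivalent reformulation of Theorem~\ref{thm:main_result} without spelling out details, and your argument—the identity $\phi \ast (\operatorname{M}_{-\omega} f) = \calG f(\cdot,\omega)$ via evenness of $\phi$, restriction to $\tfrac{1}{4B}\bbZ$ to invoke Theorem~\ref{thm:main_result}, and Lemma~\ref{lem:gabor_bandlimited} with Nyquist--Shannon for the converse direction—is precisely that equivalence made explicit.
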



\bibliography{sn-bibliography}


\begin{thebibliography}{13}
\ifx \bisbn   \undefined \def \bisbn  #1{ISBN #1}\fi
\ifx \binits  \undefined \def \binits#1{#1}\fi
\ifx \bauthor  \undefined \def \bauthor#1{#1}\fi
\ifx \batitle  \undefined \def \batitle#1{#1}\fi
\ifx \bjtitle  \undefined \def \bjtitle#1{#1}\fi
\ifx \bvolume  \undefined \def \bvolume#1{\textbf{#1}}\fi
\ifx \byear  \undefined \def \byear#1{#1}\fi
\ifx \bissue  \undefined \def \bissue#1{#1}\fi
\ifx \bfpage  \undefined \def \bfpage#1{#1}\fi
\ifx \blpage  \undefined \def \blpage #1{#1}\fi
\ifx \burl  \undefined \def \burl#1{\textsf{#1}}\fi
\ifx \doiurl  \undefined \def \doiurl#1{\url{https://doi.org/#1}}\fi
\ifx \betal  \undefined \def \betal{\textit{et al.}}\fi
\ifx \binstitute  \undefined \def \binstitute#1{#1}\fi
\ifx \binstitutionaled  \undefined \def \binstitutionaled#1{#1}\fi
\ifx \bctitle  \undefined \def \bctitle#1{#1}\fi
\ifx \beditor  \undefined \def \beditor#1{#1}\fi
\ifx \bpublisher  \undefined \def \bpublisher#1{#1}\fi
\ifx \bbtitle  \undefined \def \bbtitle#1{#1}\fi
\ifx \bedition  \undefined \def \bedition#1{#1}\fi
\ifx \bseriesno  \undefined \def \bseriesno#1{#1}\fi
\ifx \blocation  \undefined \def \blocation#1{#1}\fi
\ifx \bsertitle  \undefined \def \bsertitle#1{#1}\fi
\ifx \bsnm \undefined \def \bsnm#1{#1}\fi
\ifx \bsuffix \undefined \def \bsuffix#1{#1}\fi
\ifx \bparticle \undefined \def \bparticle#1{#1}\fi
\ifx \barticle \undefined \def \barticle#1{#1}\fi
\bibcommenthead
\ifx \bconfdate \undefined \def \bconfdate #1{#1}\fi
\ifx \botherref \undefined \def \botherref #1{#1}\fi
\ifx \url \undefined \def \url#1{\textsf{#1}}\fi
\ifx \bchapter \undefined \def \bchapter#1{#1}\fi
\ifx \bbook \undefined \def \bbook#1{#1}\fi
\ifx \bcomment \undefined \def \bcomment#1{#1}\fi
\ifx \oauthor \undefined \def \oauthor#1{#1}\fi
\ifx \citeauthoryear \undefined \def \citeauthoryear#1{#1}\fi
\ifx \endbibitem  \undefined \def \endbibitem {}\fi
\ifx \bconflocation  \undefined \def \bconflocation#1{#1}\fi
\ifx \arxivurl  \undefined \def \arxivurl#1{\textsf{#1}}\fi
\csname PreBibitemsHook\endcsname

\bibitem{alaifari2021uniqueness}
\begin{barticle}
\bauthor{\bsnm{Alaifari}, \binits{R.}},
\bauthor{\bsnm{Wellershoff}, \binits{M.}}:
\batitle{Uniqueness in {STFT} phase retrieval for bandlimited functions}.
\bjtitle{Applied and Computational Harmonic Analysis}
\bvolume{50},
\bfpage{34}--\blpage{48}
(\byear{2021}).
\doiurl{10.1016/j.acha.2020.08.003}
\end{barticle}
\endbibitem

\bibitem{grohs2023injectivity}
\begin{barticle}
\bauthor{\bsnm{Grohs}, \binits{P.}},
\bauthor{\bsnm{Liehr}, \binits{L.}}:
\batitle{Injectivity of {G}abor phase retrieval from lattice measurements}.
\bjtitle{Applied and Computational Harmonic Analysis}
\bvolume{62},
\bfpage{173}--\blpage{193}
(\byear{2023}).
\doiurl{10.1016/j.acha.2022.09.001}
\end{barticle}
\endbibitem

\bibitem{alaifari2021phase}
\begin{botherref}
\oauthor{\bsnm{Alaifari}, \binits{R.}},
\oauthor{\bsnm{Wellershoff}, \binits{M.}}:
Phase retrieval from sampled {G}abor transform magnitudes: {c}ounterexamples.
Journal of Fourier Analysis and Applications
\textbf{28}(9)
(2021).
\doiurl{10.1007/s00041-021-09901-7}
\end{botherref}
\endbibitem

\bibitem{wellershoff2021injectivity}
\begin{botherref}
\oauthor{\bsnm{Wellershoff}, \binits{M.}}:
Injectivity of sampled {G}abor phase retrieval in spaces with general
  integrability conditions.
arXiv:2112.10136 [math.FA]
(2021)
\end{botherref}
\endbibitem

\bibitem{alaifari2020phase}
\begin{botherref}
\oauthor{\bsnm{Alaifari}, \binits{R.}},
\oauthor{\bsnm{Bartolucci}, \binits{F.}},
\oauthor{\bsnm{Wellershoff}, \binits{M.}}:
Phase retrieval of bandlimited functions for the wavelet transform.
arXiv:2009.05029 [math.FA]
(2020)
\end{botherref}
\endbibitem

\bibitem{balan2006signal}
\begin{barticle}
\bauthor{\bsnm{Balan}, \binits{R.}},
\bauthor{\bsnm{Casazza}, \binits{P.}},
\bauthor{\bsnm{Edidin}, \binits{D.}}:
\batitle{On signal reconstruction without phase}.
\bjtitle{Applied and Computational Harmonic Analysis}
\bvolume{20}(\bissue{3}),
\bfpage{345}--\blpage{356}
(\byear{2006}).
\doiurl{10.1016/j.acha.2005.07.001}
\end{barticle}
\endbibitem

\bibitem{heinosaari2013quantum}
\begin{barticle}
\bauthor{\bsnm{Heinosaari}, \binits{T.}},
\bauthor{\bsnm{Mazzarella}, \binits{L.}},
\bauthor{\bsnm{Wolf}, \binits{M.M.}}:
\batitle{Quantum tomography under prior information}.
\bjtitle{Communications in Mathematical Physics}
\bvolume{318},
\bfpage{355}--\blpage{374}
(\byear{2013}).
\doiurl{10.1007/s00220-013-1671-8}
\end{barticle}
\endbibitem

\bibitem{grochenig2001foundations}
\begin{bbook}
\bauthor{\bsnm{Gr{\"o}chenig}, \binits{K.}}:
\bbtitle{Foundations of Time-{F}requency Analysis}.
\bsertitle{Applied and Numerical Harmonic Analysis}.
\bpublisher{Springer},
\blocation{New York}
(\byear{2001})
\end{bbook}
\endbibitem

\bibitem{wellershoff2022phase}
\begin{botherref}
\oauthor{\bsnm{Wellershoff}, \binits{M.}}:
Phase retrieval of entire functions and its implications for {G}abor phase
  retrieval.
arXiv:2202.03733 [math.CV]
(2022)
\end{botherref}
\endbibitem

\bibitem{mcdonald2004phase}
\begin{barticle}
\bauthor{\bsnm{Donald}, \binits{J.N.M.}}:
\batitle{Phase retrieval and magnitude retrieval of entire functions}.
\bjtitle{Journal of Fourier Analysis and Applications}
\bvolume{10},
\bfpage{259}--\blpage{267}
(\byear{2004}).
\doiurl{10.1007/s00041-004-0973-9}
\end{barticle}
\endbibitem

\bibitem{titchmarsh1939theory}
\begin{bbook}
\bauthor{\bsnm{Titchmarsh}, \binits{E.C.}}:
\bbtitle{The Theory of Functions},
\bedition{2}nd edn.
\bpublisher{Oxford University Press},
\blocation{Amen House, London, E.C.4}
(\byear{1939})
\end{bbook}
\endbibitem

\bibitem{zalik1978approximation}
\begin{barticle}
\bauthor{\bsnm{Zalik}, \binits{R.A.}}:
\batitle{On approximation by shifts and a theorem of {W}iener}.
\bjtitle{Transactions of the American Mathematical Society}
\bvolume{243},
\bfpage{299}--\blpage{308}
(\byear{1978}).
\doiurl{10.1090/S0002-9947-1978-0493077-1}
\end{barticle}
\endbibitem

\bibitem{bargmann1961Hilbert}
\begin{barticle}
\bauthor{\bsnm{Bargmann}, \binits{V.}}:
\batitle{On a {H}ilbert space of analytic functions and an associated integral
  transform part {I}}.
\bjtitle{Communications on Pure and Applied Mathematics}
\bvolume{14}(\bissue{3}),
\bfpage{187}--\blpage{214}
(\byear{1961}).
\doiurl{10.1002/cpa.3160140303}
\end{barticle}
\endbibitem

\end{thebibliography}


\end{document}